\def\MR#1{\href{http://www.ams.org/mathscinet-getitem?mr=#1}{MR#1}}
\def\arXiv#1{arXiv:\href{http://arXiv.org/abs/#1}{#1}}
\definecolor{Gray}{gray}{0.95}
\newcolumntype{d}[1]{D{.}{.}{#1}} 
\pgfplotsset{compat=1.14}
\newtheorem{theorem}{Theorem}[section]
\newtheorem{proposition}[theorem]{Proposition}
\numberwithin{equation}{section}
\numberwithin{figure}{section}
\numberwithin{table}{section}
\newcommand{\R}{\mathbb{R}}
\newcommand{\C}{\mathbb{C}}
\newcommand{\Q}{\mathbb{Q}}
\newcommand{\Z}{\mathbb{Z}}
\newcommand{\N}{\mathbb{N}}
\newcommand{\M}{\mathcal{M}}
\newcommand{\SL}{\textup{SL}}
\newcommand{\GL}{\textup{GL}}
\newcommand{\vol}{\mathop\textup{vol}}
\newcommand{\supp}{\mathop\textup{supp}}
\renewcommand{\Re}{\mathop\textup{Re}}
\renewcommand{\Im}{\mathop\textup{Im}}
\title[Dual linear programming bounds for sphere packing\dots]{Dual linear
programming bounds\\ for sphere packing via modular forms}
\author{Henry Cohn}
\address{Microsoft Research New England\\
One Memorial Drive\\
Cambridge, MA 02142} \email{cohn@microsoft.com}
\author{Nicholas Triantafillou}
\address{Department of Mathematics\\
Massachusetts Institute of Technology\\
Cambridge, MA 02139}
\curraddr{Department of Mathematics\\
	University of Georgia\\
Athens, GA 30602}
\email{nicholas.triantafillou@gmail.com}
\thanks{Triantafillou was partially supported by an internship at Microsoft Research New
England, a National Science Foundation Graduate Research Fellowship under
grant \#1122374, and Simons Foundation grant \#550033.}
\begin{document}

\begin{abstract}
We obtain new restrictions on the linear programming bound for sphere
packing, by optimizing over spaces of modular forms to produce feasible
points in the dual linear program. In contrast to the situation in
dimensions $8$ and $24$, where the linear programming bound is sharp, we
show that it comes nowhere near the best packing densities known in
dimensions $12$, $16$, $20$, $28$, and $32$.  More generally, we provide a
systematic technique for proving separations of this sort.
\end{abstract}

\maketitle

\section{Introduction}

The sphere packing problem asks for the densest packing of congruent spheres
in $\R^d$.  In other words, what is the greatest proportion of $\R^d$ that
can be covered by congruent balls with disjoint interiors? The case $d=1$ is
trivial, $d=2$ was solved by Thue \cite{T1892}, and $d=3$ was solved by Hales
\cite{H2005} with a computer-assisted proof that has since been formally
verified \cite{Hplus2017}.  These proofs make essential use of the geometry
of packings in $\R^d$ in a way that seems difficult to extend to higher
dimensions, and so another approach is needed when $d$ is large. Based on a
long history of linear programming bounds in coding theory, Cohn and Elkies
\cite{CE2003} developed a linear programming bound for sphere packing.  It
yields the best upper bounds known for the packing density in high dimensions
\cite{CZ2014}, and Cohn and Elkies conjectured that the linear programming
bound is sharp when $d=8$ or $d=24$.

In a recent breakthrough, Viazovska \cite{V2017} proved this conjecture for
$d=8$, and thus showed that the $E_8$ root lattice yields the densest sphere
packing in $\R^8$. Shortly thereafter, Cohn, Kumar, Miller, Radchenko, and
Viazovska \cite{CKMRV2017} proved the conjecture for $d=24$. These are the
only two cases beyond $d=3$ in which the sphere packing problem has been
solved.

These advances raise numerous questions.  Is it possible that the linear
programming bound is sharp in some other dimensions?  Could it even be sharp
in every dimension?  (Surely not, but why not?)  What happens in $\R^{16}$,
and why does that case seemingly not behave like $\R^8$ and $\R^{24}$? These
questions remain mysterious, but in this paper we take some initial steps
towards answering them.

The difficulty in analyzing the linear programming bound stems from the use
of an auxiliary function, which must satisfy certain inequalities. The
quality of the bound depends on the choice of this function, and optimizing
the bound amounts to optimizing a functional over the infinite-dimensional
space of auxiliary functions.  This optimization problem has not been solved
exactly except when $d \in \{1,8,24\}$.  In other dimensions, we can
approximate the true optimum by using a computer to optimize over a
finite-dimensional subspace.  The resulting auxiliary function always proves
some bound for the sphere packing density, and we expect it to be close to
the optimal linear programming bound if the subspace is large and generic
enough.  However, nobody has been able to determine how close it must be.
What if these numerical computations are woefully far from the true optimum?
If that were the case, then they would shed very little light on the linear
programming bound.  It is even possible, albeit implausible, that the linear
programming bound might be sharp for relatively small values of $d$ that
nobody has noticed yet.

\begin{figure}
\begin{center}
\begin{tikzpicture}
\begin{axis}[every x tick/.style={black}, every y tick/.style={black},
ymode = log,
xmin=0, xmax=32, ymin=0.00001,
xlabel=\textup{Dimension},
ylabel=\textup{Sphere packing density},
height=8cm, width=12cm,
domain = 0:32,
enlargelimits = false,
xtick = {4,8,12,16,20,24,28,32},
legend style={at={(0.3,0.075)},anchor=south}]

\addplot +[mark=none, color=black] plot coordinates {
		(1, 1.000000000 )
		(2, 0.906899683 )
		(3, 0.779746762 )
		(4, 0.647704966 )
		(5, 0.524980022 )
		(6, 0.417673416 )
		(7, 0.327455611  )
		(8, 0.253669508 )
		(9, 0.194555339  )
		(10, 0.147953479 )
		(11, 0.111690766  )
		(12, 0.083775831 )
		(13, 0.0624817002 )
		(14, 0.0463644893 )
		(15, 0.0342482621 )
		(16, 0.0251941308 )
		(17, 0.0184640904 )
		(18, 0.0134853405 )
		(19, 0.0098179552)
		(20, 0.0071270537 )
		(21, 0.0051596604)
		(22, 0.0037259420 )
		(23, 0.0026842799)
		(24, 0.0019295744 )
		(25, 0.001384190723)
		(26, 0.000991023890)
		(27, 0.000708229796)
		(28, 0.000505254217)
		(29, 0.000359858186)
		(30, 0.000255902875)
		(31, 0.000181708382)
		(32, 0.000128843289)
};

\addplot +[mark=o, mark size=1.25pt, only marks, color=black] plot
coordinates {
		(1, 1.000000000 )
		(2, 0.906899682 )
		(3, 0.740480489 )
		(4, 0.616850275 )
		(5, 0.465257613 )
		(6, 0.372947545 )
		(7, 0.295297873 )
		(8, 0.253669507 )
		(9, 0.145774875 )
		(10, 0.099615782 )
		(11, 0.066238027  )
		(12, 0.049454176 )
		(13, 0.0320142921 )
		(14, 0.0216240960 )
		(15, 0.0168575706 )
		(16, 0.0147081643 )
		(17, 0.0088113191 )
		(18, 0.0061678981 )
		(19, 0.0041208062 )
		(20, 0.0033945814)
		(21, 0.0024658847 )
		(22, 0.0024510340 )
		(23, 0.0019053281)
		(24, 0.0019295743 )
		(25, 0.00067721200977)
		(26, 0.00026922005043)
		(27, 0.00015759439072)
		(28, 0.00010463810492)
		(29, 0.00003414464690)
		(30, 0.00002191535344)
		(31, 0.00001183776518)
		(32, 0.00001104074930)
}; 	

\addplot[mark=*, mark size=1.25pt, only marks] plot coordinates {
		(12, 0.08338213702  )
		(16, 0.02501194070 )
		(20, 0.00673551674  )
		(28, 0.00048047111  )
		(32, 0.00012085887  )
};
	
\addplot[mark=none] plot coordinates {
		(0,1) }; 	

\legend{\ Linear programming bound\\ Densest known packing\\Dual (lower)
bound\\}
\end{axis}
\end{tikzpicture}
\end{center}
\caption{The upper curve is the linear programming bound computed using the
best auxiliary functions currently known, while the white circles are the
densest sphere packings currently known (see \cite[pp.~xix--xx]{CS1999}). Our
new obstructions, drawn as black circles, are lower bounds for the linear
programming bound.  They show that further optimizing the choice of auxiliary
function cannot improve the linear programming bound by much.}
\label{fig:PackingsBoundsObstructions}
\end{figure}
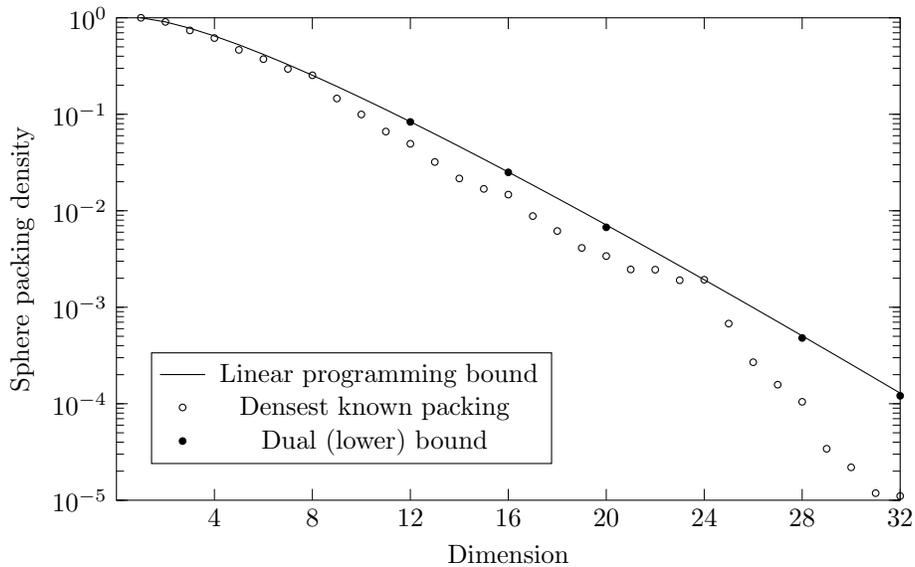

As shown in Figure~\ref{fig:PackingsBoundsObstructions}, the linear
programming bound seems to vary smoothly as a function of dimension, and the
sharp bounds in $8$ and $24$ dimensions fit perfectly with the curve as a
whole. These observations raise our confidence that the numerical
optimization is not in fact misleading. However, there remains a fundamental
gap in the theory of the linear programming bound: how can one prove a
corresponding lower bound, beyond which no auxiliary function can pass?  In
optimization terms, such a bound amounts to a \emph{dual linear programming
bound}, which controls how good the optimal linear programming bound could
be.

In this paper, we show how to compute such a bound when the dimension is a
multiple of four, by optimizing over spaces of modular forms. (We expect that
other dimensions work similarly, but we have not carried out the modular form
calculations in those cases.) Our results for dimensions $12$, $16$, $20$,
$28$, and $32$ are shown in Figure~\ref{fig:PackingsBoundsObstructions} and
Table~\ref{table:results}. The most noteworthy cases are dimensions $12$ and
$16$, where the Coxeter-Todd and Barnes-Wall lattices are widely conjectured
to be optimal sphere packings:

\begin{theorem}
The linear programming bound for the sphere packing density in $\R^{16}$ is
greater than $1.7$ times the density of the Barnes-Wall lattice, and the
bound in $\R^{12}$ is greater than $1.686$ times the density of the
Coxeter-Todd lattice.  In particular, the linear programming bound cannot
prove that either lattice is an optimal sphere packing.
\end{theorem}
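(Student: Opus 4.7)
The plan is to use linear programming duality. The Cohn--Elkies bound states that the sphere packing density in $\R^d$ is at most $\mathop\textup{vol}(B_{r/2}) \cdot f(0)/\hat{f}(0)$ for any admissible Schwartz function $f$, namely one with $f(x) \leq 0$ for $|x| \geq r$ and $\hat{f}(\xi) \geq 0$ for all $\xi$. To lower-bound the \emph{optimum} of this LP over $f$ (which is what the theorem actually demands), I would exhibit feasible points in its dual. Concretely, it suffices to construct a nonzero radial tempered distribution $\mu$ on $\R^d$ that is a positive measure supported on $\{0\} \cup \{x : |x|\geq r\}$ and whose Fourier transform $\hat{\mu}$ is also a positive measure. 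For any admissible $f$, Parseval then gives
\begin{equation*}
f(0)\,\mu(\{0\}) \;\geq\; f(0)\,\mu(\{0\}) + \int_{|x|\geq r} f\,d\mu \;=\; \int \hat{f}\,d\hat{\mu} \;\geq\; \hat{f}(0)\,\hat{\mu}(\{0\}),
\end{equation*}
so $f(0)/\hat{f}(0) \geq \hat{\mu}(\{0\})/\mu(\{0\})$, yielding the desired lower bound on the LP bound.

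The task is therefore to construct such $\mu$ in dimensions $12$ and $16$ with $\hat{\mu}(\{0\})/\mu(\{0\})$ large enough to beat the Coxeter--Todd and Barnes--Wall densities by factors of $1.686$ and $1.7$ respectively. When $d \equiv 0 \pmod 4$, radial Schwartz $(\pm1)$-Fourier eigenfunctions are parametrized by pairs of modular forms of weight $d/2$ on a congruence subgroup (typically $\Gamma_0(2)$), via the same Laplace-transform machinery that underlies the Viazovska and CKMRV constructions --- but here the roles of ``primal'' and ``dual'' sign conditions are swapped: instead of producing $f$ with $f \leq 0$ past $r$ and $\hat{f} \geq 0$, one produces measures with the opposite positivity pattern.

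With this setup in hand, the calculation has three steps. First, fix a finite-dimensional family of modular forms and pull back to a finite-dimensional affine family of candidate measures $\mu$; the parameters are the Fourier coefficients of the modular forms, and the quantities $\mu(\{0\})$ and $\hat{\mu}(\{0\})$ are explicit linear functionals of these coefficients. Second, set up the optimization: maximize $\hat{\mu}(\{0\})/\mu(\{0\})$ subject to pointwise positivity of the radial density of $\mu$ on $[r,\infty)$ and of the radial density of $\hat{\mu}$ on $[0,\infty)$. Third, solve this numerically to obtain a candidate, and then rigorously certify positivity of the two resulting one-variable functions.

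The main obstacle is this last certification. The inequalities in question must hold on unbounded intervals, so a purely numerical check is insufficient; I would handle the tail via an asymptotic expansion of the Laplace transforms of the chosen modular forms (the dominant cusp contributions control the behavior for large $|x|$ and large $|\xi|$), and handle the bounded range by exact or interval-arithmetic evaluation on a sufficiently fine grid together with explicit Lipschitz bounds, or alternatively by writing the candidate densities as manifestly nonnegative combinations of squares of modular forms. Once positivity is certified, plugging in the specific optimized coefficients in dimensions $12$ and $16$ produces the numerical thresholds $1.686$ and $1.7$ claimed in the theorem, and the conclusion that no admissible $f$ can certify optimality of the Coxeter--Todd or Barnes--Wall lattice is immediate.
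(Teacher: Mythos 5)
Your overall duality framework is exactly right, and it matches the paper's Proposition~\ref{prop:dualLP}: exhibit a dual feasible object $\mu$ with a delta at the origin, support otherwise outside the ball of radius $r$, and $\widehat{\mu}$ positive with an atom at $0$, then apply the Parseval inequality. The theorem does reduce to constructing such $\mu$ in dimensions $12$ and $16$ with the ratio $\widehat{\mu}(\{0\})/\mu(\{0\})$ large enough. But the mechanism you propose for building $\mu$ from modular forms is not the paper's, and it has a gap that I think is fatal.

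You propose running the Viazovska/CKMRV Laplace-transform machinery ``in reverse'' to produce radial objects with the opposite positivity pattern, and you speak of certifying ``pointwise positivity of the radial density of $\mu$ on $[r,\infty)$.'' This presupposes that $\mu$ has a continuous radial density away from the origin. But the Laplace transform of a (weakly) modular form in the variable $r^2$ is real-analytic on $(0,\infty)$, so its only way to vanish on all of $(0,r)$ is to vanish identically. The support constraint $\supp(\nu)\subseteq\{|x|\ge r\}$ is therefore not an open condition you can optimize toward; it is structurally incompatible with an analytic density. Double zeros at discrete radii (which is what the Laplace-transform constructions force and what suffices for the \emph{primal} problem) are not good enough here: the dual problem needs actual vanishing on an interval, which forces $\mu$ to be singular.

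The paper gets around this by producing \emph{discrete} measures directly. Proposition~\ref{PoissonAnalogue} is a Voronoi-type summation formula, derived from the functional equation of the $L$-function $\Lambda(s,g)=\Lambda(k-s,\widetilde g)$ for $g\in M_{d/2}(\Gamma_1(N))$: it shows that $\mu=\sum_{n\ge 0}a_n\delta_{\sqrt n}$ and $(2/\sqrt N)^{d/2}\sum_{n\ge 0}b_n\delta_{2\sqrt{n/N}}$ are Fourier transforms of one another, where $a_n$, $b_n$ are the $q$-coefficients of $g$ and its Atkin--Lehner dual $\widetilde g$. With discrete measures, the support constraint is exactly the vanishing of finitely many coefficients $a_1=\dots=a_{T-1}=0$, the positivity constraints are $a_n\ge 0$, $b_n\ge 0$, and the whole thing becomes a finite linear program over the space $M_{d/2}(\Gamma_0(N))$ (the paper uses $N=96$, $T=9$ in dimension $12$ and $N=96$, $T=20$ in dimension $16$, with congruence level far beyond $\Gamma_0(2)$). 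The nonnegativity of all but finitely many coefficients is then certified not by an analytic tail estimate on a Laplace transform, but by splitting $g$ into Eisenstein plus cuspidal parts, bounding the cuspidal coefficients by Deligne's estimate $|c_n|\le\sigma_0(n)n^{(k-1)/2}$, and showing the Eisenstein part grows like $n^{k-1}$. So you should replace ``Laplace transforms of modular forms'' with the functional-equation/Voronoi summation formula as the bridge from modular forms to dual measures; once you do that, the rest of your plan (finite LP, tail certification, plug in the numbers) lines up with the paper.
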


Unsurprisingly, in neither case is the linear programming bound even close to
reaching the best density known.  The ratios $1.7$ and $1.686$ are almost
certainly not quite optimal, and we expect that they could be improved to
$1.712$ and $1.694$, respectively, which would match the known upper bounds
to three decimal places.  See Section~\ref{sec:open} for further discussion.

Note that even when the linear programming bound is far from sharp,
determining its value is of interest in its own right.  For example, it can
be interpreted as describing an uncertainty principle for the signs of a
function and its Fourier transform (see \cite{CG2019}). Thus, it has
significance beyond just the topic of sphere packing.

\subsection{The linear programming bound}

Before proceeding further, let us review how the linear programming bound
works.  Recall that a \emph{sphere packing} in $\R^{d}$ is a disjoint union
$\bigcup_{x\in C} B(x,\rho)$ of open unit balls of some fixed radius $\rho$
and centered at the points of some subset $C$ of $\R^{d}$.

Given a sphere packing $\mathcal P$, the \emph{upper density}
$\Delta_{\mathcal P}$ of $\mathcal P$ is defined by
\[
\Delta_{\mathcal P} = \limsup_{r \to \infty} \frac{\vol (B(x,r) \cap \mathcal P)}{\vol(B(x,r))}
\]
for any $x \in \R^d$ (the upper density does not depend on the choice of
$x$).  If the limit exists, and not just the limit superior, then we say that
$\mathcal P$ has \emph{density} $\Delta_{\mathcal P}$.  The \emph{sphere
packing density in $\R^d$} is
\[
\Delta_{d} = \sup_{\mathcal P \subset \R^{d}} \Delta_{\mathcal P},
\]
where the supremum is over sphere packings $\mathcal P$. We will often
renormalize and work with the \emph{upper center density}
\[
\delta_{\mathcal P} = \frac{\Delta_{\mathcal P}}{\vol(B(0,1))} =
\limsup_{r \to \infty} \frac{\# (B(x,r) \cap C)}{\vol(B(x,r))} \cdot \frac{\vol(B(0,\rho))}{\vol(B(0,1))},
\]
which measures the number of center points per unit volume in space if we use
spheres of radius $\rho=1$. Of course the center density has no theoretical
advantage over the density, but it is often convenient not have to carry
around the factor of $\vol(B(0,1)) = \pi^{d/2}/(d/2)!$.  For example,
$\delta_{24}=1$, while $\Delta_{24} = \pi^{12}/12! = 0.00192957\dotsc$.

We normalize the \emph{Fourier transform} $\widehat{f}$ of an integrable
function $f \colon \R^d \to \R$ by
\[
\widehat{f}(y) = \int_{\R^d} f(x) e^{-2\pi i \langle x,y \rangle} \, dx,
\]
where $\langle \cdot,\cdot\rangle$ denotes the usual inner product on $\R^d$.
Cohn and Elkies \cite{CE2003} showed\footnote{Strictly speaking, the paper
\cite{CE2003} imposed stronger hypotheses on $f$, but one can easily remove
those hypotheses by mollifying $f$, using the approach from the first
paragraph of Section~4 in \cite{C2002}.  The fact that they could be removed
was first observed in \cite{CK2007}.} how to use harmonic analysis to bound
the sphere packing density as follows:

\begin{theorem}[Cohn and Elkies \cite{CE2003}]\label{thm:LPbd}
Let $f \colon \R^d \to \R$ be a continuous, integrable function, such that
$\widehat{f}$ is integrable as well and $\widehat{f}$ is real-valued (i.e.,
$f$ is even). Suppose $f$ and $\widehat{f}$ satisfy the following
inequalities for some positive real number $r$:
\begin{enumerate}
\item $f(0)>0$ and $\widehat{f}(0) > 0$,

\item $f(x) \le 0$ for $|x| \ge r$, and

\item $\widehat{f}(y) \ge 0$ for all $y$.
\end{enumerate}
Then every sphere packing in $\R^d$ has upper center density at most
\[
\frac{f(0)}{\widehat{f}(0)} \cdot \left(\frac{r}{2}\right)^d.
\]
\end{theorem}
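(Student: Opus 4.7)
The plan is to follow the classical Poisson summation argument. First I would reduce to the case of periodic packings: the sphere packing density $\Delta_d$ is the supremum of densities of periodic packings (those invariant under some lattice $\Lambda \subset \R^d$), and it suffices to bound the upper center density for each such packing. By rescaling $f$ (replacing $f(x)$ by $f(\lambda x)$), I may further assume that the minimum distance between centers equals $r$, so that the balls have radius $r/2$ and a periodic packing $C = \Lambda + \{v_1,\dots,v_N\}$ has upper center density $N(r/2)^d/\vol(\R^d/\Lambda)$. The goal then becomes $N/\vol(\R^d/\Lambda) \le f(0)/\widehat{f}(0)$.

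The key step is to evaluate the double sum
\[
S = \sum_{j,k=1}^N \sum_{x \in \Lambda} f(v_j - v_k + x)
\]
in two ways. Applying Poisson summation to the inner sum over $\Lambda$ gives
\[
S = \frac{1}{\vol(\R^d/\Lambda)} \sum_{y \in \Lambda^*} \widehat{f}(y) \Bigl|\sum_{j=1}^N e^{2\pi i \langle y, v_j\rangle}\Bigr|^2,
\]
where $\Lambda^*$ is the dual lattice. Since $\widehat{f}(y) \ge 0$ by hypothesis (3), every term on the right is nonnegative, so keeping only the $y=0$ term gives $S \ge \widehat{f}(0) N^2/\vol(\R^d/\Lambda)$. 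On the other hand, each triple $(j,k,x)$ with $v_j - v_k + x \neq 0$ contributes a point of $C - C$ of norm at least $r$, so $f(v_j - v_k + x) \le 0$ by hypothesis (2); the only surviving contributions come from the $N$ diagonal triples $(j,j,0)$, which give $Nf(0)$. Hence $S \le Nf(0)$, and combining the two bounds (using $\widehat{f}(0) > 0$) yields $N/\vol(\R^d/\Lambda) \le f(0)/\widehat{f}(0)$, as desired.

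The main obstacle is that Poisson summation requires enough decay to guarantee absolute convergence of both the spatial and spectral sums, whereas the hypotheses only assume integrability of $f$ and $\widehat{f}$. To handle this, I would first prove the theorem under a stronger decay hypothesis (say, both $f$ and $\widehat{f}$ are Schwartz, or decay faster than $(1+|x|)^{-d-\epsilon}$), where Poisson summation is classically justified. To remove the extra hypothesis, I would mollify: convolve $f$ with a narrow Gaussian and multiply $\widehat{f}$ by the corresponding Gaussian, producing a family of auxiliary functions with the same (or slightly perturbed) sign conditions, to which the bound applies; then let the width tend to zero. This mollification argument, which is the content of the footnote's reference to \cite{C2002}, is the only nonroutine analytic ingredient, since the rest of the proof is essentially a one-line application of Poisson summation and the sign hypotheses.
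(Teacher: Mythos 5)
Your argument is exactly the classical Cohn--Elkies proof: reduce to periodic packings, apply Poisson summation to the double sum over translates, discard all but the $y=0$ term on the dual side using $\widehat{f}\ge 0$, bound the spatial side by $Nf(0)$ using $f\le 0$ past radius $r$, and mollify to relax the decay hypotheses. The paper does not give its own proof but simply cites \cite{CE2003} together with a footnote pointing to the mollification argument in \cite{C2002}; your proposal reproduces that cited argument faithfully, including the mollification step.
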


The \emph{linear programming bound} in $\R^d$ is the infimum of the center
density upper bound
\[
\frac{f(0)}{\widehat{f}(0)} \cdot \left(\frac{r}{2}\right)^d
\]
over all \emph{auxiliary functions} $f$ satisfying the hypotheses of
Theorem~\ref{thm:LPbd}.  See Figure~\ref{fig:sample} for an example of an
auxiliary function, which is far from optimal.

\pgfplotsset{ymin=-0.4, ymax=1.2}
\begin{figure}
\begin{center}
\begin{tikzpicture}
\begin{axis}[every x tick/.style={black}, every y tick/.style={black},no markers,domain=-2:2,
samples=256,axis lines*=left, y label style={rotate=-90}, xlabel=$x$,
ylabel=$\displaystyle f(x)$,  height=3cm, width=10cm,
enlargelimits=false, clip=false, axis on top,
xtick = {-2,-1,0,1,2}, ytick = {0,1}]
\draw [gray]  (axis cs:-2,0) -- (axis cs:2,0);
\addplot[color=black] {lp_f(x)};
\end{axis}
\end{tikzpicture}
\vskip 0.5cm
\begin{tikzpicture}
\begin{axis}[every x tick/.style={black}, every y tick/.style={black},no markers,domain=-2:2,
samples=256,axis lines*=left, y label style={rotate=-90}, xlabel=$y$,
ylabel=$\displaystyle \widehat{f}(y)$,  height=3cm, width=10cm,
enlargelimits=false, clip=false, axis on top,
xtick = {-2,-1,0,1,2}, ytick = {0,1}]
\draw [gray]  (axis cs:-2,0) -- (axis cs:2,0);
\addplot[color=black] {lp_fhat(x)};
\end{axis}
\end{tikzpicture}
\caption{A sample auxiliary function and its Fourier transform
(namely, $f(x) = (1-x^2) e^{-x^2}$ on
$\R^1$, with $r=1$).} \label{fig:sample}
\end{center}
\end{figure}
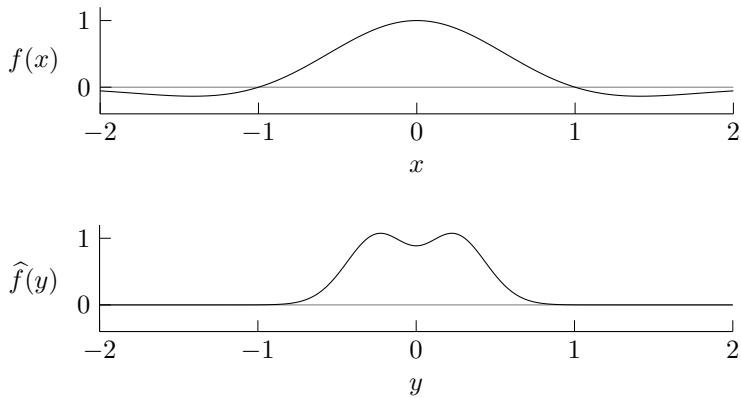

Without loss of generality, we can assume that the auxiliary function $f$ is
radial, because we can simply average its rotations about the origin.  For a
radial function $f$, we write $f(t)$ with $t \in [0,\infty)$ to denote the
common value $f(x)$ with $|x|=t$.  If $f$ is radial, then $\widehat{f}$ is
radial as well, and
\[
\widehat{f}(y) = \frac{2\pi}{|y|^{d/2-1}} \int_{0}^{\infty}  f(t) J_{d/2-1}(2\pi t|y|) t^{d/2} \,dt,
\]
where $J_{d/2-1}$ is the Bessel function of the first kind of order $d/2-1$
(see, for example, Theorem~9.10.3 in \cite{AAR1999}).

The density bound
\[
\frac{f(0)}{\widehat{f}(0)} \cdot \left(\frac{r}{2}\right)^d
\]
is invariant under replacing $f$ with $x \mapsto f(\rho x)$ and $r$ with
$r/\rho$ for any scaling factor $\rho \in (0,\infty)$.  Without loss of
generality we can use this invariance to fix $r=1$, and we can assume
$\widehat{f}(0)=1$ as well. Then the constraints on $f$ from
Theorem~\ref{thm:LPbd} are linear inequalities, and the density bound is also
a linear functional of $f$. Thus, optimizing the choice of $f$ amounts to
solving an infinite-dimensional linear optimization problem, which explains
the name ``linear programming bound.'' In practice, however, fixing $r$ may
not lead to the prettiest answers.  For example, Cohn and Elkies found more
elegant behavior if one instead fixes $f(0)=\widehat{f}(0)$ and lets $r$ vary
(see Section~7 of \cite{CE2003}).

The best choice of $f$ is not known, except when $d \in \{1,8,24\}$, and
little is known about how good the optimal bound might be.  It is not hard to
produce upper bounds by numerically optimizing over finite-dimensional spaces
of functions, and in most cases these upper bounds seem to be close to the
optimal linear programming bound (see \cite{ACHLT} for the most extensive
calculations so far). However, these computational methods leave open the
possibility that other auxiliary functions might prove much better bounds.

What sort of obstructions prevent the linear programming bound from reaching
the density of the best sphere packing?  In this paper we provide a partial
answer, with an algorithm to compute such obstructions via linear programming
over spaces of modular forms of weight $d/2$. The algorithm is based on
optimizing a summation formula for radial Schwartz functions, which is an
analogue of Voronoi summation.

The remainder of the paper is organized as follows. In
Section~\ref{generalSetup}, we present a general framework for computing dual
linear programming bounds.  We describe our algorithm in
Section~\ref{algorithm}, and we prove the summation formula underlying the
algorithm in Section~\ref{poisson}.  In Section~\ref{positivityCheck}, we
expand on the final step of our algorithm by describing a method for checking
in finite time that all of the coefficients of the $q$-expansion of a given
modular form are nonnegative. Finally, we present a table of new lower bounds
in Section~\ref{results}, and we conclude with open problems in
Section~\ref{sec:open}.

\section{Duality} \label{generalSetup}

Computing a bound for the objective function in a linear program is typically
straightforward: it just amounts to finding a feasible point in the dual
linear program.  The difficulty in our case is that the optimization problems
are infinite-dimensional.  The primal problem is relatively tractable,
because the auxiliary functions in Theorem~\ref{thm:LPbd} are well behaved in
practice.  We can approximate them with polynomials times Gaussians, and
using high-degree polynomials yields excellent results.  For example, in
$\R^{16}$ the resulting center density bounds seem to converge to
\[
0.10705844234092448845891681517141\dotsc
\]
as the polynomial degree tends to infinity, and we believe this number is the
optimal linear programming bound for $16$ dimensions, correct to $32$ decimal
places. Unfortunately, the dual problem is much less tractable.  It amounts
to optimizing over a space of measures, and we believe the optimal measures
will be singular (specifically, supported on a discrete set of radii). In
particular, we know of no simple family of measures we can use to approximate
them fruitfully. Instead, the dual problem appears to be quite a bit more
subtle.

In Section~4 of \cite{C2002}, Cohn formulated the dual linear program as
follows. Here, $\delta_0$ denotes a delta function at the origin, and
$\widehat{\mu}$ is the Fourier transform of $\mu$ as a tempered distribution.

\begin{proposition} \label{prop:dualLP}
Let $\mu$ be a tempered distribution on $\R^d$ such that $\mu = \delta_0 +
\nu$ with $\nu \ge 0$, $\supp(\nu) \subseteq \{x \in \R^d : |x| \ge r\}$ for
some $r>0$, and $\widehat{\mu} \ge c \delta_0$ for some $c>0$. Then the
linear programming bound in $\R^d$ is at least
\[
c \cdot \left(\frac{r}{2}\right)^d.
\]
\end{proposition}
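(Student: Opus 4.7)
The plan is to pair $\mu$ against an admissible auxiliary function $f$ (as in Theorem~\ref{thm:LPbd}) and exploit Parseval together with the sign hypotheses on $\mu$, $\widehat{\mu}$, $f$, and $\widehat{f}$. The chain of inequalities to be established is
\[
c\,\widehat{f}(0) \;\le\; \langle \widehat{\mu},\widehat{f}\rangle \;=\; \langle \mu, f\rangle \;\le\; f(0),
\]
from which $f(0)/\widehat{f}(0)\ge c$ follows, and multiplying by $(r/2)^d$ yields the desired lower bound.

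To set this up, first I would reduce to the case where $f$ is a Schwartz function, so that the tempered distribution pairings are legal. This is harmless: by the mollification argument already cited in the footnote to Theorem~\ref{thm:LPbd}, the infimum defining the linear programming bound is unchanged if one restricts to Schwartz auxiliary functions. One may further assume $f$ is radial (hence even) by averaging over rotations, which is compatible with all three conditions of Theorem~\ref{thm:LPbd}. Evenness of $f$ makes the convention $\langle \widehat{\mu},\varphi\rangle=\langle\mu,\widehat{\varphi}\rangle$ give the clean identity $\langle \widehat{\mu},\widehat{f}\rangle=\langle \mu,f\rangle$, since $\widehat{\widehat{f}}(x)=f(-x)=f(x)$.

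Next I would read off the two one-sided inequalities. For the upper bound on $\langle \mu,f\rangle$, decompose $\mu=\delta_0+\nu$ to get $\langle\mu,f\rangle=f(0)+\langle\nu,f\rangle$; since a nonnegative tempered distribution is a positive Radon measure, and $\supp\nu\subseteq\{|x|\ge r\}$ where $f\le 0$, the term $\langle\nu,f\rangle$ is nonpositive, so $\langle\mu,f\rangle\le f(0)$. For the lower bound on $\langle \widehat{\mu},\widehat{f}\rangle$, write $\widehat{\mu}=c\delta_0+\sigma$ with $\sigma\ge 0$; since $\widehat{f}\ge 0$, we have $\langle\sigma,\widehat{f}\rangle\ge 0$ and therefore $\langle\widehat{\mu},\widehat{f}\rangle\ge c\,\widehat{f}(0)$. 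Stitching the two together and rearranging gives $f(0)/\widehat{f}(0)\ge c$, and taking the infimum over all admissible $f$ yields the bound claimed in the proposition.

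The main obstacle is entirely technical: making sure the distributional pairings against $f$, $\widehat{f}$, $\delta_0$, $\nu$, and $\sigma$ are individually well-defined and that $\langle\mu,f\rangle=\langle\widehat{\mu},\widehat{f}\rangle$ actually holds. Once $f$ is Schwartz this is standard tempered-distribution theory, so the bulk of the care lies in the reduction to the Schwartz setting. A minor subtlety to note is that $\nu$ and $\sigma$ are a priori only tempered distributions, and one must invoke the theorem that a nonnegative distribution is represented by a locally finite positive measure (of at most polynomial growth in the tempered case) in order to evaluate $\langle\nu,f\rangle$ and $\langle\sigma,\widehat{f}\rangle$ as honest integrals of continuous functions against positive measures. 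With that in hand the sign manipulations above are immediate.
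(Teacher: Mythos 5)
Your proof is correct and follows essentially the same argument as the paper's sketch: both establish $f(0)\ge\int f\,d\mu=\int\widehat{f}\,d\widehat{\mu}\ge c\,\widehat{f}(0)$ for sufficiently nice $f$ (using the sign conditions on $f$, $\widehat{f}$, $\nu$, and $\widehat{\mu}-c\delta_0$, and evenness of $f$ to get $\widehat{\widehat{f}}=f$) and then pass to general auxiliary functions by mollification. The one step you leave implicit, which the paper makes explicit, is that you must first rescale $f$ so that its radius matches the $r$ of $\mu$ before the inequality $\langle\nu,f\rangle\le 0$ applies; this is harmless because $\frac{f(0)}{\widehat{f}(0)}\left(\frac{r}{2}\right)^d$ is invariant under dilations of $f$.
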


\begin{proof}[Sketch of proof]
Let $f\colon \R^d \to \R$ be an auxiliary function satisfying the hypotheses
of Theorem~\ref{thm:LPbd}, where we use scaling invariance to ensure that the
same value of $r$ works for both $f$ and $\mu$.  If $f$ and $\widehat{f}$ are
rapidly decreasing, then the inequalities on $f$ and $\mu$ imply that
\[
f(0) \ge \int_{\R^d} f \mu = \int_{\R^d} \widehat{f} \,\widehat{\mu} \ge c \widehat{f}(0),
\]
and thus
\[
\frac{f(0)}{\widehat{f}(0)} \ge c,
\]
as desired.  More general auxiliary functions must be mollified, as described
in Section~4 of \cite{C2002}, after which the same argument applies to them
as well.
\end{proof}

The difficulty in applying this proposition is how to find a plentiful source
of distributions $\mu$ that could satisfy the hypotheses.  One source is
Poisson summation for lattices, which says that for any lattice $\Lambda$ in
$\R^d$, the Fourier transform of the distribution
\[
\sum_{x \in \Lambda} \delta_x
\]
is
\[
\frac{1}{\vol(\R^d/\Lambda)} \sum_{y \in \Lambda^*} \delta_y,
\]
where $\Lambda^*$ is the dual lattice. Thus, the hypotheses of
Proposition~\ref{prop:dualLP} are satisfied with $c= 1/\vol(\R^d/\Lambda)$
and $r = \min_{x \in \Lambda \setminus\{0\}} |x|$. The resulting lower bound
amounts to proving Theorem~\ref{thm:LPbd} for lattice packings.

In principle, one could try to improve on individual lattices by using a
linear combination of Poisson summation formulas for different lattices (see,
for example, the bottom of page~351 in \cite{C2002}).  However, that does not
seem fruitful in general. Instead, we use the following analogue of Voronoi
summation to produce distributions from modular forms. For definitions
related to modular forms, see \cite{DS2005}. In
particular, recall that the slash operator is defined as follows: if $M = \begin{pmatrix} a & b\\
c & d\end{pmatrix} \in \GL_2(\R)$ and $\det M > 0$, then
\[
(f|_k M)(z) := (ad-bc)^{k/2} (cz+d)^{-k} f\mathopen{}\left(\frac{az+b}{cz+d}\right)\mathclose{}.
\]

\begin{proposition} \label{PoissonAnalogue}
Let $d  = 2k$ with $k \in \N$, let $g\in M_{k}(\Gamma_{1}(N))$ be a modular
form of weight $k$ for the congruence subgroup $\Gamma_{1}(N)$, let $w_{N} = \begin{pmatrix} 0 & -1 \\
N & 0
\end{pmatrix}$, and let
\[
 \widetilde{g}(z) = i^k (g|_k{w_{N}})(z) = \frac{i^k}{N^{k/2}z^{k}} g\mathopen{}\left(-\frac{1}{Nz}\right)\mathclose{}
\]
be $i^k$ times the image of $g$ under the full level $N$ Atkin-Lehner
operator (so that $g = i^k \widetilde{g}|_k{w_{N}}$ as well).  Let the
$q$-expansions of $g$ and $\widetilde{g}$ be
\[
 g(z) = \sum_{n=0}^{\infty} a_{n}q^{n} \qquad\text{and}\qquad \widetilde{g}(z) = \sum_{n=0}^{\infty} b_{n} q^{n},
\]
where $q = e^{2\pi i z}$. Then for every radial Schwartz function $f \colon
\R^d \to \C$,
\[
\sum_{n=0}^{\infty} a_{n} f(\sqrt{n}) =
\left(\frac{2}{\sqrt{N}}\right)^{d/2}
\sum_{n=0}^{\infty} b_{n} \widehat{f}\mathopen{}\left(\frac{2\sqrt{n}}{\sqrt{N}}\right)\mathclose{}.
\]
\end{proposition}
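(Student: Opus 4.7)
The plan is to verify the identity first on the one-parameter family of radial Gaussians $f_s(x) = e^{-\pi s|x|^2}$ with $s > 0$, where it collapses into the modular transformation law that defines $\widetilde{g}$, and then to extend to arbitrary radial Schwartz functions by continuity together with density of the Gaussians.

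For $f = f_s$ one has $\widehat{f_s}(y) = s^{-d/2} e^{-\pi |y|^2/s}$, so
\[
\sum_{n \ge 0} a_n f_s(\sqrt{n}) = \sum_{n \ge 0} a_n e^{-\pi s n} = g(is/2)
\]
and
\[
\sum_{n \ge 0} b_n \widehat{f_s}\!\left(\frac{2\sqrt{n}}{\sqrt{N}}\right) = s^{-d/2}\sum_{n \ge 0} b_n e^{-4\pi n/(Ns)} = s^{-d/2}\,\widetilde{g}\!\left(\frac{2i}{Ns}\right).
\]
Setting $z = 2i/(Ns)$, a direct computation gives $-1/(Nz) = is/2$ and $i^k z^{-k} = (Ns)^k 2^{-k}$, hence
\[
\widetilde{g}\!\left(\frac{2i}{Ns}\right) = \frac{i^k}{N^{k/2} z^k}\, g\!\left(\frac{is}{2}\right) = N^{d/4}\, s^{d/2}\, 2^{-d/2}\, g(is/2),
\]
and multiplying by $(2/\sqrt{N})^{d/2} s^{-d/2}$ recovers $g(is/2)$. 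So the claim holds on every $f_s$.

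To extend to arbitrary radial Schwartz $f$, I would view both sides as continuous linear functionals on the Fréchet space $\mathcal{S}_{\mathrm{rad}}(\R^d)$ of radial Schwartz functions. The Fourier coefficients of any holomorphic modular form of weight $k$ grow polynomially (for instance $|a_n|,|b_n| = O(n^k)$ by the crude Hecke bound, which is more than enough), while a Schwartz function satisfies $|f(\sqrt{n})| \le C_M(1+n)^{-M}$ for every $M$, with $C_M$ controlled by finitely many Schwartz seminorms of $f$ (and likewise for $\widehat{f}$). Hence both sums converge absolutely and depend continuously on $f$. It then suffices to show that the linear span of $\{f_s : s > 0\}$ is dense in $\mathcal{S}_{\mathrm{rad}}(\R^d)$: difference quotients in $s$ at any fixed positive value of $s$ converge in every Schwartz seminorm, which puts each function $|x|^{2j} e^{-\pi s|x|^2}$ in the closure, and combinations of these produce the full radial Hermite--Laguerre basis, which is Schwartz-dense.

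The main obstacle is this density step, since density is required in the Schwartz topology rather than in a weaker $L^p$ sense, so some care is needed in justifying that the difference quotients and their Hermite rearrangement converge simultaneously in all Schwartz seminorms. An alternative route would substitute the Hankel integral representation of $\widehat{f}$ into the right-hand side and exchange the sum with the integral, reducing to a Bessel--modular integral that can be evaluated directly; this trades the density argument for a Fubini justification of comparable difficulty, but it avoids any appeal to Hermite expansions.
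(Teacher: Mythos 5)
Your proposal is correct, but it takes a genuinely different route from the paper's main proof. The paper proves Proposition~\ref{PoissonAnalogue} via Mellin inversion: it writes $\sum_{n\ge 1} a_n f(\sqrt n)$ as a contour integral of $L(s/2,g)\,\mathcal{M}f(s)$, shifts the contour past the poles at $s=0$ and $s=d$, applies the functional equation $\Lambda(s,g)=\Lambda(k-s,\widetilde g)$, and uses the Mellin identity $\mathcal{M}\widehat f(s) = \pi^{d/2-s}\Gamma(s/2)\Gamma((d-s)/2)^{-1}\mathcal{M}f(d-s)$ to turn the shifted integral into the sum over $\widehat f$. That route makes the $L$-function and Phragm\'en--Lindel\"of estimates the central objects. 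Your route instead verifies the identity directly on the Gaussians $f_s(x)=e^{-\pi s|x|^2}$ --- where it reduces to precisely the definitional identity $\widetilde g(z) = i^k N^{-k/2} z^{-k} g(-1/(Nz))$ evaluated at $z=2i/(Ns)$ --- and then extends by continuity using density of Gaussians in the radial Schwartz space. The paper explicitly acknowledges this alternative in a parenthetical remark, pointing to Section~6 of \cite{RV2018} and Section~2.3 of \cite{CKMRV2019}, so it is a recognized legitimate path. What each buys: the Mellin approach exposes the analytic-number-theory structure (the functional equation is exactly what is used, and no auxiliary density lemma is needed), while your Gaussian approach is more elementary (no contour shifting, no $L$-function growth estimates) but shifts the burden onto a functional-analytic density-and-continuity argument. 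Your Gaussian computation checks out exactly, and your sketch of the density argument (difference quotients in $s$ producing $|x|^{2j}e^{-\pi s|x|^2}$ in the Schwartz closure, then Laguerre/Hermite completeness) is the standard way to make it rigorous; the one point worth stating explicitly is that the map $s\mapsto f_s$ is smooth (indeed real-analytic) from $(0,\infty)$ into $\mathcal{S}_{\mathrm{rad}}(\R^d)$, which is what guarantees the difference quotients converge in every Schwartz seminorm. You are right to flag that step as the delicate one, and your proposed Hankel--Fubini alternative is also viable; neither is what the paper does.
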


In particular, if $\delta_r$ denotes a delta function supported on the sphere
of radius $r$ about the origin in $\R^d$, then this proposition says that the
tempered distributions
\[
\sum_{n=0}^\infty a_n \delta_{\sqrt{n}}
\qquad \text{and}\qquad
\left(\frac{2}{\sqrt{N}}\right)^{d/2}
\sum_{n=0}^{\infty} b_{n} \delta_{2\sqrt{n/N}}
\]
are Fourier transforms of each other. Our algorithm will optimize over
distributions of this form. The advantage of these distributions is that
their supports help enforce the constraint that $\supp(\nu) \subseteq \{x \in
\R^d : |x| \ge r\}$ in Proposition~\ref{prop:dualLP}.

For comparison, the techniques in Section~5 of \cite{CG2019} produce what
appear to be close numerical approximations to the optimal distributions
$\mu$. They have the form
\[
\mu = \sum_{n \ge 0} c_n \delta_{r_n}
\]
with radii given by $0 = r_0 < r_1 < r_2 < \dotsb$ and tending to infinity,
coefficients $c_n>0$, and $\widetilde{\mu} = \mu$. For example, in $\R^{16}$
the first few radii and coefficients are listed in
Table~\ref{table:dual-distribution}. The only drawback is that the results of
these calculations are merely conjectural: we do not know whether such a
distribution actually exists.

Our approach in this paper amounts to approximating the optimal $\mu$ with a
distribution $\mu'$ whose existence follows from
Proposition~\ref{PoissonAnalogue}.  For comparison,
Table~\ref{table:dual-distribution} shows the best $\mu'$ we have obtained,
which we computed using the parameters $N=96$ and $T=20$ in the notation of
the next section. This distribution is of the form $\mu' = \sum_{n \ge 0}
c'_n \delta_{r'_n}$, with Fourier transform $\widehat{\mu}' = \sum_{n \ge 0}
c''_n \delta_{r''_n}$. In the table, we have rescaled the distribution $\mu'$
so that $c'_0 = c''_0 = 1$.  Note that
\begin{align*}
r_1 &\approx r'_1 \approx r''_1 \approx r''_2,\\
r_2 &\approx r'_2 \approx r'_3 \approx r''_3 \approx r''_4, \text{ and}\\
r_3 &\approx r'_4 \approx r'_5 \approx r''_5 \approx r''_6,
\end{align*}
and the sums of the corresponding coefficients are also near each other.  The
approximation to $\mu$ is not yet very close, but one can already see $\mu$
roughly emerging from $\mu'$.

\begin{table}
\caption{Radii and coefficients for dual distributions in $\R^{16}$.}
\label{table:dual-distribution}
\begin{tabular}{d{1.3}d{1.19}d{8.11}}
\toprule
n & r_n & c_n\\
0 & 0 & 1\\
\rowcolor{Gray}
1 & 1.7393272583625204\dotsc   &     8431.71627140\dotsc\\
2 & 2.2346642069957498\dotsc   &   292026.09352080\dotsc\\
\rowcolor{Gray}
3 & 2.6462005756471079\dotsc   &  3111809.14450639\dotsc\\
\midrule
n & r'_n & c'_n\\
0 & 0 & 1\\
\rowcolor{Gray}
1 & 1.7385384653461733\dotsc &       8360.61230142\dotsc\\
2 & 2.1990965401230488\dotsc &       4240.44226222\dotsc\\
3 & 2.2331930934327142\dotsc &     282582.90774253\dotsc\\
\rowcolor{Gray}
4 & 2.6366241274825130\dotsc &    2419678.28385080\dotsc\\
\rowcolor{Gray}
5 & 2.6651290005171109\dotsc &     584982.54962505\dotsc\\
\midrule
n & r''_n & c''_n\\
0 & 0 & 1\\
\rowcolor{Gray}
1 & 1.6604472109700065\dotsc &        133.02471778\dotsc\\
\rowcolor{Gray}
2 & 1.7414917267847931\dotsc &       8321.61159562\dotsc\\
3 & 2.2277237020673214\dotsc &     245869.54859549\dotsc\\
4 & 2.2887685306282807\dotsc &      50042.27252495\dotsc\\
\rowcolor{Gray}
5 & 2.6253975605696717\dotsc &    1578408.61282183\dotsc\\
\rowcolor{Gray}
6 & 2.6773906784567302\dotsc &    1610965.69273527\dotsc\\
\bottomrule
\end{tabular}
\end{table}

It is natural to ask why $\Gamma_1(N)$ appears in Proposition~\ref{PoissonAnalogue},
rather than the group $\Gamma_0(N)$ that is more often used in the theory of modular forms.
This is simply a matter of generality: $\Gamma_1(N)$ is a subgroup of $\Gamma_0(N)$, and thus
$M_{k}(\Gamma_{1}(N))$ contains $M_{k}(\Gamma_{0}(N))$ and is generally larger. For comparison,
$\Gamma(N)$ is an even smaller group, but it is not closed under conjugation by $w_N$, which
means the map $g \mapsto \widetilde{g}$ used in Proposition~\ref{PoissonAnalogue} 
does not preserve $M_{k}(\Gamma(N))$.

\section{An algorithm for dual linear programming bounds} \label{algorithm}

Instead of using modular forms for the congruence
subgroup $\Gamma_1(N)$, for simplicity we will restrict our attention to
those for the larger group $\Gamma_0(N)$ (equivalently, to modular forms for
$\Gamma_1(N)$ that have trivial Nebentypus).  This restriction entails some loss of
generality: for example, $\Gamma_0(N)$ does not have modular forms of odd weight,
while $\Gamma_1(N)$ often does. However, $\Gamma_0(N)$ serves as an
attractive proving ground for the
general theory, and it should suffice when the dimension $d$ is a multiple of
$4$.

Specifically, let $k=d/2$ be an even integer, and let $M_k(\Gamma_0(N))$ be
the space of modular forms of weight $k$ for $\Gamma_0(N)$.  Recall that this
space has a basis consisting of modular forms with rational coefficients in
their $q$-expansions (see, for example, Corollary~12.3.12 in \cite{DI1995}).
Furthermore, the Atkin-Lehner involution on $M_k(\Gamma_0(N))$ preserves the
property of having rational coefficients (see Lemma~3.5.3 in \cite{O1995}).

In practice, to simplify Section~\ref{positivityCheck} we also assume that
$N$ is not divisible by $16^2$, $9^2$, or $p^2$ for any prime $p > 3$, but
this assumption is not essential.

We would like to find a modular form $g =\sum_{n \ge 0} a_n q^n$ in
$M_k(\Gamma_0(N))$ with the following properties for some $T$, where we set
$\widetilde{g} = i^k g|_k w_N = \sum_{n \ge 0} b_n q^n$:
\begin{enumerate}
\item $a_0=1$ and $b_0>0$,

\item $a_n \ge 0$ and $b_n \ge 0$ for all $n \ge 0$, and

\item $a_n = 0$ for $1 \le n < T$.
\end{enumerate}
Then we use the distribution
\[
\mu = \sum_{n \ge 0} a_n \delta_{\sqrt{n}}
\]
in Proposition~\ref{prop:dualLP}.  By Proposition~\ref{PoissonAnalogue}, we
have $c=(2/\sqrt{N})^{d/2}b_0$ and $r = \sqrt{T}$ in the notation of
Proposition~\ref{prop:dualLP}.  Thus, we obtain a lower bound of
\[
b_0 \left(\frac{2}{\sqrt{N}}\right)^{d/2} \left(\frac{\sqrt{T}}{2}\right)^d
\]
for the linear programming bound in $\R^d$, and we wish to choose $g$ so as
to maximize this bound.  We will do so by linear programming, with one
caveat: all our calculations will consider only the terms up to $q^M$ in the
$q$-series for some fixed $M$, and at the end we must check that the
inequalities are not violated beyond that point.

Let $g^1, \dots, g^{\dim M_k(\Gamma_0(N))}$ be a basis of $M_k(\Gamma_0(N))$
with rational $q$-series coefficients, and let $\widetilde{g}^{j} = i^k
g^{j}|_k{w_{N}}$ be $i^k$ times the image of $g^{j}$ under the full level $N$
Atkin-Lehner involution. We write the $q$-expansions of the modular forms
$g^j$ and $\widetilde{g}^j$ as
\[
g^{j} = \sum_{n = 0}^{\infty} a^{j}_{n} q^{n} \qquad\text{and}\qquad \widetilde{g}^{j} = \sum_{n=0}^{\infty} b^{j}_{n}q^{n},
\]
and we fix integers $T$ and $M$ with $1 \leq T < \dim M_k(\Gamma_0(N)) < M$.
These bases and $q$-series can all be computed algorithmically (see, for
example, \cite{S2007}).

Now we write $g = \sum_j x_j g^j$ with respect to our basis, and we optimize
over the choice of coefficients $x_j$ by solving the following linear
program:
\[
\begin{array}{ll}
\text{maximize} & \sum_j x_{j} b^{j}_{0} \\
\text{subject to} & 1 = \sum_j x_{j} a^{j}_{0}, \\
& 0 = \sum_j x_{j} a^{j}_{n} \text{ for $1 \leq n < T$,}\\
& 0 \leq \sum_j x_{j} a^{j}_{n} \text{ for $T \leq n \leq M$, and}\\
& 0 \leq \sum_j x_{j} b^{j}_{n} \text{ for $1 \leq n \leq M$.}
\end{array}
\]
These inequalities encode all the desired properties of $f$ and $g$, except
that we examine only the terms up to $q^M$ in the $q$-series.

We hope that if $M$ is large enough, then all the terms beyond $q^M$ will
have nonnegative coefficients automatically, and we attempt to use asymptotic
bounds to confirm that all of the coefficients of $g$ and $\widetilde{g}$ are
nonnegative (see Section~\ref{positivityCheck}). If this verification fails,
we can increase $M$ and attempt the optimization problem again. In practice,
$M = 2 \cdot \dim M_k(\Gamma_0(N))$ typically seems to be sufficient for the
algorithm to succeed, and it works for all the numerical results we report in
this paper.

To find the best possible bounds, we run the method for several values of $N$
and $T$. Larger values of $N$ typically yield better results, but not always.
It seems difficult to predict the best values for $T$ in general, although
they also tend to increase as $N$ increases. See Section~\ref{results} for
the results of this method applied to the spaces $M_k(\Gamma_0(N))$ of
modular forms of weight $k \in \{6,8,10,14,16\}$ and level $N = 24$ or $96$.

For a concrete illustration of the method, consider the case $d=16$ and
$N=4$. One can show that the space $M_8(\Gamma_0(4))$ is five-dimensional,
with the following basis. Let
\[
E_8(z) = 1 + 480\sum_{n=1}^\infty \sigma_7(n) q^n
\]
be the Eisenstein series of weight $8$ for $\SL_2(\Z)$ (not to be confused
with the $E_8$ root lattice), and let $f$ be the newform of weight $8$ for
$\Gamma_0(2)$ defined by
\[
f(z) = q \prod_{n=1}^\infty (1-q^n)^8 (1-q^{2n})^8.
\]
Then $M_g(\Gamma_0(4))$ has the basis $g^1,\dots,g^5$, where $g^1(z) =
E_8(z)$, $g^2(z) = 16 E_8(2z)$, $g^3(z) = 256 E_8(4z)$, $g^4(z) = f(z)$, and
$g^5(z) = 16 f(2z)$.  The Atkin-Lehner involution acts by $\widetilde{g}^1 =
g^3$, $\widetilde{g}^2 = g^2$, $\widetilde{g}^3 = g^1$, $\widetilde{g}^4 =
g^5$, and $\widetilde{g}^5 = g^4$. Using this information, we can write down
the linear program explicitly and solve it. As usual, the trickiest part is
identifying the right choice of $T$, while we can simply take $M$ large
enough (e.g., $M=10$ is more than sufficient).

For $T=2$, solving the linear program yields the modular form
\begin{align*}
\sum\nolimits_j x_j g^j &= \frac{1}{17} g^1 + \frac{1}{17} g^2 - \frac{480}{17} g^4\\
&= 1+4320q^2 + 61440q^3 + 522720q^4 + \dotsb,
\end{align*}
which is the theta series of the Barnes-Wall lattice.  Similarly, for $T=4$
we obtain
\begin{align*}
\sum\nolimits_j x_j g^j &= \frac{1}{272} g^2 + \frac{1}{272} g^3 - \frac{30}{17} g^5\\
&= 1+4320q^4 + 61440q^6 + 522720q^8 + \dotsb,
\end{align*}
which is the same modular form with $q$ replaced by $q^2$ and which yields
the same bound.  For these two values of $T$, the space $M_8(\Gamma_0(4))$ is
incapable of separating the linear programming bound from the center density
$0.0625$ of the Barnes-Wall lattice. However, for $T=3$ we obtain
\begin{align*}
\sum\nolimits_j x_j g^j &= \frac{1}{136} g^1  -\frac{121}{2176}g^2 + \frac{1}{136}g^3  -\frac{60}{17} g^4  -\frac{60}{17} g^5\\
& = 1 + 7680q^3 + 4320q^4 + 276480q^5 + \dotsb,
\end{align*}
which yields an improved center density lower bound of $3^8/2^{16} =
0.100112\dotsc$, more than $60\%$ greater than the center density of the
Barnes-Wall lattice. In fact, this modular form has been studied before: it
is the \emph{extremal theta series} in $16$ dimensions (see equation (47) in
\cite[p.~190]{CS1999}).

It is tempting to conjecture that the extremal theta series should exactly
match the optimal linear programming bound.  This conjecture would be a
beautiful analogue of the behavior in $8$ and $24$ dimensions.  In those
cases the optimal lattices have determinant $1$ and minimal norm $2$ or $4$,
respectively.  The extremal theta series in $16$ dimensions behaves like the
theta series of a lattice of determinant $1$ and minimal norm $3$, exactly
interpolating between $8$ and $24$ dimensions.  No such lattice exists
\cite{MV2010}, but the linear programming bound could match the density of a
hypothetical lattice.

That is a good approximation in this case, but the answer turns out to be
more subtle: in Section~\ref{results}, we obtain a better lower bound using
$N=96$.  Instead of minimal norm $3$, the improved lower bound is $3.022$.
For comparison, we believe the true linear programming bound amounts to a
minimal norm of \[3.02525931168288206328208655790196\dotsc,\] but we are
unable to conjecture an exact formula for this number.

\section{Poisson summation analogues from modular forms} \label{poisson}

The main result of this section is Proposition~\ref{PoissonAnalogue}, which
yields a summation formula from a modular form. Summation formulas of this
sort are well known to number theorists, and essentially equivalent to the
functional equation for the $L$-function. We record the details here and
sketch a proof for the convenience of the reader. (One can also prove such a
formula using the density of complex Gaussians among radial Schwartz
functions, along the lines of Section~6 in \cite{RV2018} or Section~2.3 in
\cite{CKMRV2019}.)

Proposition~\ref{PoissonAnalogue} is essentially a version of Voronoi
summation. Our proof will follow the approach used in standard proofs of
Voronoi summation (for example, as in Section~10.2.5 of \cite{C2007} or
Section~2 of \cite{MS2004}). The key idea comes from the classical
observation that the usual Poisson summation formula is a consequence of the
functional equation of the Riemann zeta function. Similarly,
Proposition~\ref{PoissonAnalogue} follows from the functional equation
relating the $L$-functions associated to a modular form and its Atkin-Lehner
dual.

In what follows, we use the notation established in
Proposition~\ref{PoissonAnalogue}. To state the functional equation, we first
define the \emph{$L$-function}
\[
L(s,g) = \sum_{n=1}^{\infty} \frac{a_{n}}{n^{s}}
\]
when $\Re(s)>k$, and the \emph{completed $L$-function}
\[
\Lambda(s,g) = N^{s/2}(2\pi)^{-s} \Gamma(s) L(s,g).
\]
The functional equation relating $\Lambda(s,g)$ and
$\Lambda(s,\widetilde{g})$ is classical, dating back to Hecke \cite{H1936}.
It says that the $L$-functions can be analytically continued so that
\[
\Lambda(s,g) + \frac{a_0}{s} + \frac{b_0}{k-s}
\]
is entire and bounded in every vertical strip, and we have the functional
equation
\[
\Lambda(s,g) = \Lambda(k-s, \widetilde{g}),
\]
or equivalently
\begin{equation} \label{eq:funceq}
L\mathopen{}\left(k - \frac{s}{2}, g\right)\mathclose{} = N^{(s-k)/2} (2\pi)^{k-s} \frac{\Gamma(s/2)}{\Gamma(k - s/2)}
L\mathopen{}\left(\frac{s}{2}, \tilde{g}\right)\mathclose{}.
\end{equation}
See, for example, Theorem~1 in \cite[p.~I-5]{O1969}.

\begin{proof}[Sketch of proof of Proposition~\ref{PoissonAnalogue}]
For a radial Schwartz function $f$ on $\R^d$, let
\[
S= \sum_{n\geq 1} a_n
f(\sqrt{n}).
\]
By Mellin inversion,
\[
a_n f(\sqrt{n}) = \frac{1}{2\pi i} \int_{\Re(s) = \sigma} \frac{a_{n}}{n^{s/2}} \M f(s) \,ds
\]
for any $\sigma > 0$, where the \emph{Mellin transform} $\M f$ is defined by
\[
\M f (s) = \int_0^\infty f(x) x^s \frac{dx}{x}.
\]
In particular, for $\sigma = d + \varepsilon$ with $\varepsilon>0$,
\begin{align*}
S   & = \frac{1}{2\pi i} \sum_{n=1}^{\infty} \int_{\Re(s) = d+\varepsilon} \frac{a_{n}}{n^{s/2}} \M f(s) \,ds \\
& = \frac{1}{2\pi i} \int_{\Re(s) = d + \varepsilon} L\mathopen{}\left(\frac{s}{2}, g\right)\mathclose{} \M f(s) \,ds,
\end{align*}
where switching the sum and integral is permitted because of the uniform
convergence of the sum defining the $L$-function.

The integrand $L(s/2, g) \M f(s)$ is negligible when $s$ has large imaginary
part.  To see why, note that by a stationary phase argument the Mellin
transform $\M f(s)$ is rapidly decaying as $\Im(s)$ grows, while $L(s/2, g)$
grows at most polynomially in $\Im(s)$ by the Phragm\'en-Lindel\"of
principle.  Thus, we can shift the contour of integration to the left, as
long as we account for poles.

It is not hard to check that $\M f(s)$ has a possible pole at $s=0$ with
residue $f(0)$, $L(s/2, g)$ has a possible pole at $s=d$ with residue \[ 2
\left(\frac{2\pi}{\sqrt{N}}\right)^{d/2} \frac{1}{\Gamma(d/2)} b_0,
\]
and $L(0,g) = -a_{0}$, since the pole of $\Gamma(s)$ at $s=0$ cancels the
pole of $\Lambda(s,g)$ at $s=0$. Thus,
\[
S =
-a_0 f(0) + 2 b_0 \left(\frac{2 \pi}{\sqrt{N}}\right)^{d/2}
\frac{1}{\Gamma(d/2)} \M f(d) + \frac{1}{2\pi i}  \int_{\Re(s)
=-\varepsilon} L\mathopen{}\left(\frac{s}{2}, g\right)\mathclose{} \M f(s) \,ds.
\]

Setting
\[
 T = \frac{1}{2\pi i}  \int_{\Re(s) =-\varepsilon} L\mathopen{}\left(\frac{s}{2}, g\right)\mathclose{} \M f(s) \,ds
\]
and applying the identity $\widehat{f}(0) = \frac{2\pi^{d/2}}{\Gamma(d/2)}\M
f(d)$, we see that
\begin{equation} \label{eq:include0}
a_0 f(0) + S = \left(\frac{2}{\sqrt{N}}\right)^{d/2} b_{0} \widehat{f}(0) + T.
\end{equation}
Changing variables from $s$ to $d-s$ and applying the functional equation
\eqref{eq:funceq} yields
\begin{align*}
 T & = \frac{1}{2\pi i} \int_{\Re(s) = d + \varepsilon} L\mathopen{}\left(\frac{d-s}{2}, g\right)\mathclose{} \M f(d-s) \,ds \\
& = \frac{1}{2\pi i} \int_{\Re(s) = d+\varepsilon} N^{s/2-d/4} (2\pi)^{d/2-s} \frac{\Gamma(s/2)}{\Gamma((d-s)/2)}
L\mathopen{}\left(\frac{s}{2}, \widetilde{g}\right)\mathclose{}\M f(d-s) \,ds.
\end{align*}

Now we use the identity
\[
\M \widehat{f}(s) = \frac{\pi^{d/2 - s} \Gamma(s/2)}{\Gamma((d - s)/2)} \M f(d - s)
\]
(see Theorem~5.9 in \cite{LL2001}). Making this substitution, we find that
\[
 T  = \left(\frac{2}{\sqrt{N}}\right)^{d/2} \frac{1}{2\pi i} \int_{\Re(s) = d+\varepsilon} \left(\frac{4}{N}\right)^{-s/2}
 L\mathopen{}\left(\frac{s}{2}, \widetilde{g}\right)\mathclose{} \M \widehat{f}(s) \,ds.
\]
Replacing the $L$-function with its defining sum, switching the sum and
integral as above, and applying Mellin inversion again (reversing the steps
from the start of the proof), we see that
\begin{align*}
T & = \left(\frac{2}{\sqrt{N}}\right)^{d/2} \frac{1}{2\pi i}  \int_{\Re(s) = d+\varepsilon} \sum_{n=1}^{\infty} \frac{b_{n}}{(4n/N)^{s/2}}\M \widehat{f}(s) \,ds \\
& = \left(\frac{2}{\sqrt{N}}\right)^{d/2} \sum_{n=1}^{\infty} \frac{1}{2\pi i}  \int_{\Re(s) = d+\varepsilon} \frac{b_{n}}{(4n/N)^{s/2}}\M \widehat{f}(s) \,ds \\
& = \left(\frac{2}{\sqrt{N}}\right)^{d/2} \sum_{n=1}^{\infty} b_n \widehat{f}\mathopen{}\left(\frac{2\sqrt{n}}{\sqrt{N}} \right)\mathclose{}.
\end{align*}
Hence, \eqref{eq:include0} implies that
\[
\sum_{n=0}^{\infty} a_{n} f(\sqrt{n}) = \left(\frac{2}{\sqrt{N}}\right)^{d/2} \sum_{n=0}^{\infty} b_{n}
\widehat{f}\mathopen{}\left(\frac{2\sqrt{n}}{\sqrt{N}}\right)\mathclose{},
\]
as desired.
\end{proof}

\section{Checking positivity of modular form coefficients} \label{positivityCheck}

In this section, we explain how we check whether a modular form of weight $k$
for $\Gamma_0(N)$ has nonnegative coefficients in its $q$-series.  This
method uses only standard techniques from the theory of modular forms, but we
describe them here for the benefit of readers in discrete geometry.  The key
idea is that Eisenstein series typically make the dominant contribution
asymptotically, which reduces the problem to a finite calculation if the
Eisenstein contribution is positive.

As mentioned above, we assume for simplicity that $N$ is not divisible by
$16^2$, $9^2$, or $p^2$ for any prime $p > 3$. This assumption guarantees
that all the characters in this section are real. Furthermore, we assume that
$k \ge 3$, because the Eisenstein series for weight $2$ must be obtained
using different formulas (the formulas that work for $k \ge 3$ no longer
converge when $k=2$).

To verify that $g = \sum_{n=0}^{\infty} a_n q^n$ has $a_n \geq 0$ for all
$n$, we write $g$ as $g_e +g_c$, where $g_e = \sum_{n=0}^{\infty} e_n q^n$ is
a linear combination of Eisenstein series and $g_c = \sum_{n=0}^{\infty} c_n
q^n$ is cuspidal, and we attempt to carry out the following steps:
\begin{enumerate}
\item Use Weil bounds to show that $|c_n| \le C_g n^{k/2}$ for some
    explicit constant $C_g$.

\item Use explicit formulas for Eisenstein series to show that $e_n \ge r_g
    n^{k-1}$ for some explicit constant $r_g>0$.

\item Compare the Eisenstein part and the cuspidal part to produce a bound
    $Q$ such that $a_n > 0$ for $n > Q$.

\item Explicitly compute the coefficients $a_n$ of $g$ to check that $a_n
    \geq  0$ for $n \le Q$.
\end{enumerate}

The first step is straightforward, given some powerful machinery. Deligne's
proof of the Weil conjectures \cite{D1974} implies that, independent of
weight, if  $h = \sum_{n=1}^{\infty} c_n q^n$ is a cuspidal Hecke eigenform
normalized so that $c_{n'} = 1$ for the minimal $n'$ with $c_{n'} \neq 0$,
then $|c_n| \leq \sigma_0(n) n^{(k-1)/2} \leq n^{k/2}$. Let $B_k(N)$ be the
set of such eigenforms, which are a basis for the cuspidal part of
$M_k(\Gamma_0(N))$. (Note that the elements of $B_k(N)$ typically do not have
rational coefficients.  Instead, we must work over a larger number field.) If
\[
g_c  = \sum_{n=1}^{\infty} c_n q^n = \sum_{h \in B_k(N)} x_h h
\]
with coefficients $x_h \in \C$, then
\[
|c_n| \leq  n^{k/2} \sum_{h\in B_k(N)}
|x_h|.
\]
Thus, step~(1) holds with $C_g = \sum_{h\in B_k(N)} |x_h|$.

For the second step, we need to write down the Eisenstein series explicitly.
We can describe them in terms of primitive Dirichlet characters $\phi$ of
conductor $u$ and natural numbers $t$ such that $u^2t \mid N$ (where $a \mid
b$ means $a$ divides $b$). Thanks to our divisibility hypotheses on $N$, it
follows that $u \mid 24$, and therefore $\phi$ must be a real character; in
other words, it takes on only the values $\pm 1$. Then the Eisenstein series
in $M_k(\Gamma_0(N))$ all have the form
\[
E^{\phi}_{t} = \frac{\delta(\phi)}{2} L(1-k, \phi) + \sum_{\substack{n \ge 1,\\ t \mid n}} \phi(n/t) \sigma_{k-1}(n/t) q^{n},
\]
where $\sigma_{\ell}(m) = \sum_{d\mid m} d^{\ell}$, $L(s,\phi)$ is the
$L$-function of $\phi$, and
\[
\delta(\phi) = \begin{cases} 1 & \text{ if $\phi$ is the trivial character of conductor $1$, and} \\
                             0 & \text{ otherwise.}
\end{cases}
\]
See, for example, Theorem~4.5.2 in \cite{DS2005}.

Since the Eisenstein series span the Eisenstein part of $M_k(\Gamma_0(N))$,
there exist constants $y^{\phi}_{t}$ such that
\begin{align*}
g_e  & = \sum_{t, \phi} y^{\phi}_{t} E^{\phi}_{t}\\
       & = e_0 + \sum_{t,\phi} \sum_{\substack{n \ge 1,\\ t \mid n}} y^{\phi}_{t} \phi(n/t) \sigma_{k-1}(n/t) \\
       & = e_0 + \sum_{n=1}^{\infty} \sum_{\substack{t \mid N,\\ t \mid n}} \left( \sum_{\phi}  y^{\phi}_t \phi(n/t)\right) \sigma_{k-1}(n/t).
\end{align*}
It is straightforward to check that whenever $t \mid n$,
\[
\frac{\sigma_{k-1}(n)}{\sigma_{k-1}(t)} \leq \sigma_{k-1}(n/t) \leq \frac{\sigma_{k-1}(n)}{t^{k-1}}.
\]
This implies that if we set
\begin{flalign*}
&& r_g(t,n) & = \begin{cases}
\frac{1}{t^{k-1}} & \text{ if $\sum_{\phi}  y^{\phi}_{t} \phi(n/t) < 0$, and}\\
 \frac{1}{\sigma_{k-1}(t)} & \text{ if $\sum_{\phi}  y^{\phi}_{t} \phi(n/t) \geq 0$,}
\end{cases} &&\\
&& r_g(n) & = \sum_{\substack{t \mid N,\\ t \mid n}} \left( \sum_{\phi}  y^{\phi}_{t} \phi(n/t) \right) r_g(t,n), &&\\
&\text{and}\hidewidth\\
&& r_g &= \min_{n \geq 1} r_g(n) = \min_{1 \leq n \leq N} r_g(n), &&
\end{flalign*}
then
\[
e_n \geq  \sigma_{k-1}(n) r_g \geq n^{k-1}r_g.
\]
This completes step~(2), provided that $r_g$ is positive. If it is not
positive, then our test will be inconclusive, since we are unable to certify
that even the Eisenstein part is nonnegative.

Combining the results of the previous two steps, we find that
\[
a_n \geq n^{k-1} r_g - n^{k/2} C_g.
\]
Since $k > 2$, this inequality provides an easily computed bound $Q =
\lfloor(C_g/r_g)^{2/(k-2)}\rfloor$ such that $a_n > 0$ for all $n > Q$.
Because of the large gap between $n^{k-1}$ and $n^{k/2}$, the bound $Q$ is
typically relatively small. Finally, to certify that the coefficients of $g$
are all nonnegative, we explicitly compute the coefficients $a_n$ for $n \leq
Q$.

This method will not always work, without more careful estimates.  For
example, it fails if $a_n$ is not eventually positive.  That can occur in
practice: in the example from Section~\ref{algorithm} with $d=16$, $N=4$, and
$T=2$, the optimal modular form is
\[
g = 1 + 4320q^2 + 61440q^3 + 522720q^4 + 2211840q^5 + 8960640q^6 + \dotsb,
\]
which has eventually positive coefficients, but
\[
\widetilde{g} =  16 + 69120q^4 + 983040q^6 + 8363520q^8 + 35389440q^{10} + \dotsb,
\]
which does not.  Thus, proving that $\widetilde{g}$ has nonnegative
coefficients requires a little more care.  However, we have not observed this
phenomenon for the best choices of $T$ in any of the cases we have examined.
If it were to occur, it could be handled by distinguishing between the values
of $r_g(n)$ for different residue classes of $n$ modulo $N$, and showing that
the cuspidal contribution vanishes whenever $r_g(n)=0$.

\section{Numerical results} \label{results}

Table~\ref{table:results} shows our numerical results. We used the SageMath
computer algebra system \cite{sage} for our calculations, with one exception:
we used Magma \cite{BCP1997} to compute bases for modular forms and the
action of the Atkin-Lehner involution.  This combination works conveniently,
because SageMath has an interface for calling Magma code.

To produce rigorous results, we used exact rational arithmetic, and we proved
nonnegativity of coefficients using the techniques of
Section~\ref{positivityCheck}. For calculations with forms of level $24$, we
directly solved the linear program over $\Q$; for level $96$, we instead used
floating point arithmetic to obtain an approximate solution, which we then
used to obtain a rational solution and prove its correctness and optimality.
All the numbers in the table are rounded correctly: lower bounds are rounded
down, and upper bounds are rounded up.
The data underlying the new bounds in Table~\ref{table:results} can be downloaded
from \cite{CT}. In the notation of Section~\ref{algorithm}, this data set contains the
$q$-expansion coefficients for $g(z) = \sum_n a_n q^n$
and $\widetilde{g}(z) = \sum_n b_n q^n$ with $0 \le n < 500$, which is enough information
to determine these modular forms uniquely.

\begin{table}
\caption{Center density bounds in dimensions $8$ through $32$.
The upper bound is the linear programming bound,
computed using the best auxiliary function currently known \cite{ACHLT}, while the dual
bound is based on the given values of $N$ and $T$, and the record packing is the densest
packing currently known \cite{CS1999}.  In dimensions $12$ and $16$,
we include both $N=96$ and $N=24$ for comparison.}
\label{table:results}
\begin{tabular}{llllrr}
\toprule
Dimension & \ Record packing & \ \ Dual bound & \ \ Upper bound & $N$ & $T$\\
\midrule
$8 $      & $\phantom{0}0.0625$           &  $\phantom{00}0.0625$                  & $\phantom{00}0.0625$               & $1$  & $1$\\
$12$      & $\phantom{0}0.037037$          & $\phantom{00}0.062446$          & $\phantom{00}0.062742$              & $96$ & $9$\\
     &           & $\phantom{00}0.059781$          &               & $24$ & $4$\\
$16$      & $\phantom{0}0.0625$           & $\phantom{00}0.106284$          & $\phantom{00}0.107059$              & $96$ & $20$\\
      &            & $\phantom{00}0.103948$          &              & $24$ & $6$\\
$20$      & $\phantom{0}0.131537$          & $\phantom{00}0.260996$          & $\phantom{00}0.276169$              & $24$ & $9$\\
$24$      & $\phantom{0}1$              &  $\phantom{00}1$                  & $\phantom{00}1$                  & $1$ & $2$ \\
$28$      & $\phantom{0}1$              & $\phantom{00}4.591741$          & $\phantom{00}4.828588$              & $24$ & $9$\\
$32$      & $\phantom{0}2.565784$           & $\phantom{0}28.086665$         & $\phantom{0}29.942182$             & $24$ & $12$\\
\bottomrule
\end{tabular}
\end{table}

\section{Open problems} \label{sec:open}

Our new lower bounds in Table~\ref{table:results} come fairly close to the
known upper bounds, but they do not agree to many decimal places.  We believe
that the upper bounds agree with the true linear programming bound, aside
from rounding the last decimal place up, while the lower bounds could be
further improved.  One difficulty in doing so is that modular forms are
inherently quantized: in the summation formula
\[
\sum_{n=0}^{\infty} a_{n} f(\sqrt{n}) =
\left(\frac{2}{\sqrt{N}}\right)^{d/2} \sum_{n=0}^{\infty} b_{n}
\widehat{f}\mathopen{}\left(\frac{2\sqrt{n}}{\sqrt{N}}\right)\mathclose{},
\]
there is no possibility to perturb the radii $\sqrt{n}$ or $2\sqrt{n/N}$
slightly, and so one must do the best one can using only radii of these
forms.  In particular, closely matching the upper bound may require $N$ to be
very large, perhaps on the order of $10^{10}$ if we wish to match ten digits,
and dealing with such large $N$ is not practical.  Any feasible method that
could close the gap between the primal and dual bounds to within a factor of
$1+10^{-10}$ would be a significant advance, and modular forms might not be
the right tool for this purpose.  For comparison, \cite{TS2006} and
\cite{SST2008} obtain dual linear programming bounds in high dimensions using
an entirely different approach.

Another topic we leave open is computations in dimensions that are not
divisible by $4$.  We see no theoretical obstacle to such an extension: one
must simply use modular forms of odd weight (for dimensions divisible by $2$
but not $4$) or half-integral weight (for odd dimensions), and replace
$\Gamma_0(N)$ with $\Gamma_1(N)$ so that such forms exist.  However, we have
not implemented these computations.  We have also not explored the
uncertainty principle introduced in \cite{BCK2010} and further studied in
\cite{CG2019}, for which one could again prove dual bounds using modular
forms.

One intriguing possibility that may be nearly within reach is proving that
there exists a dimension in which the linear programming bound is not sharp.
All dimensions except $1$, $2$, $8$, and $24$ seem to have this property, but
so far no proof is known. Three dimensions would be a natural target, because
we know the optimal packing density, and thus it would suffice to prove any
dual bound greater than this density.  In higher dimensions, it would require
an improvement on the linear programming bound. The only such bound currently
known is Theorem~1.4 from de Laat, Oliveira, and Vallentin's paper
\cite{LOV2014}, which is a refinement of the linear programming bound that
seems to give a small numerical improvement in dimensions $3$, $4$, $5$, $6$,
$7$, and $9$ (see Table~1 in \cite{LOV2014}) and presumably higher dimensions
as well, aside from $24$. Any dual bound greater than this improved upper
bound would suffice to show that the linear programming bound is not sharp.
Conversely, it would be interesting to prove dual bounds for the theorem of
de Laat, Oliveira, and Vallentin itself.

\end{document}